\theoremstyle{theorem}
\newtheorem{theorem}{Theorem}[section]
\newtheorem{lemma}[theorem]{Lemma}
\newtheorem{question}[theorem]{Question}
\newtheorem{problem}[theorem]{Problem}
\newtheorem{thmx}{Theorem}
\newtheorem{corox}[thmx]{Corollary}
\newtheorem*{rep@theorem}{\rep@title}
\newcommand{\newreptheorem}[2]{%
\newenvironment{rep#1}[1]{%
 \def\rep@title{#2 \ref{##1}}%
 \begin{rep@theorem}}%
 {\end{rep@theorem}}}
\theoremstyle{definition}
\newtheorem{definition}[theorem]{Definition}
\newtheorem{example}[theorem]{Example}
\newtheorem{remark}[theorem]{Remark}
\newcommand{\Z}{\mathbb{Z}}
\newcommand{\T}{\mathbb T}
\newcommand{\RP}{\mathbb{RP}}
\newcommand{\CP}{\mathbb{CP}}
\newcommand{\Dd}{\mathcal D}
\newcommand{\Tt}{\mathcal T}
\newcommand{\Mm}{\mathcal M}
\newcommand{\Ss}{\mathcal S}
\newcommand{\wh}{\widehat}
\begin{document}

\rhead{\thepage}
\lhead{\author}
\thispagestyle{empty}


\raggedbottom
\pagenumbering{arabic}
\setcounter{section}{0}


\title[Three-fold branched covers of $S^4$]{Note on three-fold branched covers of $S^4$}

\author[R.\ Blair]{Ryan Blair}
\address{Department of Mathematics and Statistics, California State University, Long Beach, USA}
\email{ryan.blair@csulb.edu}

\author[P.\ Cahn]{Patricia Cahn}
\address{Department of Mathematics and Statistics, Smith College, USA}
\email{pcahn@smith.edu}

\author[A.\ Kjuchukova]{Alexandra Kjuchukova}
\address{Max Planck Institute for Mathematics, Germany}
\email{sashka@mpim-bonn.mpg.de}

\author[J.\ Meier]{Jeffrey Meier}
\address{Department of Mathematics, Western Washington University, USA}
\email{jeffrey.meier@wwu.edu}

\keywords{4-manifold, branched covering, trisection}
\subjclass[2010]{57M12, 57M25}


\begin{abstract}
	We show that any 4--manifold admitting a $(g;k_1,k_2,0)$--trisection is an irregular 3--fold cover of the 4--sphere whose branching set is a surface in $S^4$, smoothly embedded except for one singular point which is the cone on a link.
	A 4--manifold admits such a trisection if and only if it has a handle decomposition with no 1--handles; it is conjectured that all simply-connected 4--manifolds have this property.	
\end{abstract}

\maketitle

\section{Introduction}
\label{sec:Intro}

A classical result in 3--manifold topology states that every closed, oriented 3--manifold can be realized as an irregular 3--fold cover of the 3--sphere branched along a knot.  This was proved independently by Hilden~\cite{hilden1974every}, Hirsch~\cite{hirsch1974offene}, and Montesinos~\cite{montesinos1974representation} in 1974. 
Irregular 3--fold covers with smooth branching sets are characterized by the property that every point on the branch locus has two pre-images, one of branching index 1 and one of branching index 2. Thus, they are special cases of simple branched covers~\cite{piergallini1995four} and of irregular dihedral branched covers~\cite{CS1984linking, kjuchukova2018dihedral}. 

Hilden's proof leveraged the way in which Heegaard splittings of 3--manifolds arise naturally as branched covers of bridge splittings of links.  Trisections, which were introduced by Gay and Kirby as a 4--dimensional analog of Heegaard splittings~\cite{gaykirby2016trisections}, 
have a similar relationship with bridge trisections of knotted surfaces~\cite{mz-bridge1,mz-bridge2}; see also~\cite{cahnkjuchukova2017singbranchedcovers,LamMei_18_Bridge-trisections,LamMeiSta_20_Symplectic-4-manifolds-admit}. We extend Hilden's techniques to dimension four using trisections.  We make use of the fact that trisections are well-behaved under branched coverings:  if $\pi\colon X\to X'$ is a (singular) branched covering and $X$ and $X'$ have trisections $\T$ and $\T'$, respectively, then we say that \emph{ $\T$ is a (singular) branched cover of $\T'$} if $\pi$ respects the decompositions of $X$ and $X'$ induced by the respective trisections, restricts to a branched covering from each piece of $\T$ to the corresponding piece of $\T'$, and the (singular) branch locus of $\pi$ is in bridge position with respect to $\T'$.

\begin{thmx}
\label{thmx:main}
	Let $X$ be a 4--manifold that admits a $(g;k_1,k_2,0)$--trisection $\T$.  Then $\T$ is an irregular 3--fold cover of the standard trisection $\mathbb{T}_0$ of $S^4$. The branching set $\Ss$ is a closed, connected surface, which is smoothly embedded in $S^4$ away from one singular point, the cone on a link.
	
	If, in addition, $k_2=0$, the branching set $\Ss'$ can be chosen to be embedded in $S^4$. In this case, there are two singular points, each the cone on a knot.
\end{thmx}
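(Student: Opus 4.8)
The plan is to transplant Hilden's three--dimensional argument into the trisection framework. Fix the standard trisection $\mathbb{T}_0$ of $S^4$, so that $S^4 = B_1 \cup B_2 \cup B_3$ with each $B_i \cong B^4$, each $B_i\cap B_j \cong B^3$, and $B_1\cap B_2\cap B_3 = S^2$. A surface in $S^4$ in \emph{singular bridge position} with respect to $\mathbb{T}_0$ --- a collection of trivial disks in each $B_i$, a trivial tangle in each $B_i\cap B_j$, finitely many points in $S^2$, together with a cone point in the interior of some $B_i$ --- equipped with a representation $\rho\colon \pi_1(S^4\setminus\Ss)\to S_3$ that has image $S_3$ and sends every meridian of the smooth part of $\Ss$ to a transposition, determines a trisection of the 3--fold irregular cover $\Sigma_3(S^4,\Ss)$ which is a branched cover of $\mathbb{T}_0$ in the sense of the introduction. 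The theorem then asserts that the given $(g;k_1,k_2,0)$--trisection $\T = (\Sigma;\alphas,\betas,\gammas)$ of $X$, with sectors $Z_1,Z_2,Z_3$ and $Z_3\cong B^4$, is realized by some such pair $(\Ss,\rho)$.

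First I would assemble the local building blocks. The 3--fold cover of $B^3$ branched over the trivial $(g+2)$--strand tangle with transposition monodromy is $B^3$, inducing the standard 3--fold cover $\Sigma\to S^2$ branched over $2(g+2)$ points; moreover --- and this is the classical input from Hilden's work --- \emph{every} genus--$g$ handlebody arises as such a cover, for a suitable choice of $g+2$ transpositions, restricting to a \emph{prescribed} cover of the boundary surface. Correspondingly, the 3--fold cover of $B^4$ branched over an appropriate trivial disk system is $B^4$, extending a prescribed cover $\Sigma\to S^2$ on the boundary. The decisive new block --- and the step I expect to be the main obstacle --- is the following: \emph{for each $k\ge 0$, the manifold $\natural^k(S^1\times B^3)$ is a 3--fold irregular branched cover of $B^4$ whose branching surface consists of trivial disks together with the cone on a link $L_k\subset S^3$ with $\Sigma_3^{\mathrm{irr}}(S^3,L_k)\cong S^3$, and this cover can be arranged to restrict to any prescribed boundary cover of the type produced by the handlebody models above.} Here the disks link $L_k$ so as to inject the required first homology; the condition $\Sigma_3^{\mathrm{irr}}(S^3,L_k)\cong S^3$ guarantees that the total space remains a manifold at the cone point; and the cone is genuinely forced, since for $k>0$ the manifold $\natural^k(S^1\times B^3)$ is not a 3--fold irregular cover of $B^4$ over a trivial disk system. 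Proving this model, and --- harder --- pinning down the Heegaard structure it induces on its boundary so that it glues to the rest, is where most of the work lies.

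With the blocks in place, the assembly follows Hilden's proof. Realize $Z_3\cong B^4$ as the 3--fold cover of $B_3$ branched over a trivial disk system; this pins down the cover over the two handlebodies $Z_3\cap Z_1$ and $Z_3\cap Z_2$ and over $\Sigma$, and one checks that the genus--$g$ Heegaard splitting of $\partial Z_3 = S^3$ prescribed by $\T$ is realized. It remains to extend the cover --- now fixed on one face of $B_1$ and one face of $B_2$ --- across $B_1$ and across $B_2$ so that the opposite face becomes the handlebody $Z_1\cap Z_2$ carrying the third cut system, and the respective sector becomes $Z_1$, respectively $Z_2$. Extending across the handlebody faces is exactly the Hilden step: choose the transpositions realizing the third cut system compatibly with the cover of $\Sigma$ already fixed. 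Extending across the 4--dimensional sector then uses the $B^4$--model when $k_i=0$ and the cone model when $k_i>0$. Gluing the pieces produces $\pi\colon X\to S^4$ with branching set the union of the disks, tangles, points, and cone(s); after inserting auxiliary trivial strands (tubed into the rest of the surface) and choosing the transpositions so that $\rho$ surjects onto $S_3$, the surface $\Ss$ is closed and connected, $\pi$ is irregular, and the two cone points coming from $Z_1$ and $Z_2$ (when $k_1,k_2>0$) can be merged into one, so that $\Ss$ is smoothly embedded away from the single resulting cone--on--a--link point.

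For the last clause, when $k_2=0$ both $Z_2$ and $Z_3$ are $4$--balls, so only $Z_1$ forces a singularity; there is then enough room to realize the non--triviality of $Z_1$ by two cone points, each modeled on a \emph{knot} and arranged so that $\Ss'$ is honestly embedded, rather than by a single cone on a link. The two points I expect to demand the most care are (i) the branched--cover model for $\natural^k(S^1\times B^3)$ and the exact bridge structure it induces on its boundary, which is what makes the sectors glue; and (ii) the combinatorial bookkeeping that realizes all three cut systems of $\T$ by transposition monodromies over a \emph{common} cover of $\Sigma$ while keeping the branching surface connected, the cover irregular, and the number and type of cone singularities as small and simple as claimed.
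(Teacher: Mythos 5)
Your high-level plan---extend Hilden's surface cover over the three handlebodies of the spine, then cap off the sectors, using cone singularities where a sector cannot be covered over trivial disks---contains the right ingredients, but two things go wrong. First, your structural claim that ``the cone is genuinely forced, since for $k>0$ the manifold $\natural^k(S^1\times B^3)$ is not a 3--fold irregular cover of $B^4$ over a trivial disk system'' is false, and it inverts the actual architecture of the proof. The irregular 3--fold cover of $B^4$ branched over a trivial system of $k+2$ disks, one colored ``1'' and the rest colored ``2,'' \emph{is} $\natural^k(S^1\times B^3)$: its boundary is the cover of the $(k+2)$--component unlink, which is $\#^k(S^1\times S^2)$ by Lemma~\ref{lemma:unlink-lift}, the cover of a trivial disk system is a 4--dimensional 1--handlebody, and Laudenbach--Poenaru identifies that 1--handlebody with the sector rel boundary. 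So the sectors with $k_i$ possibly nonzero are precisely the ones covered over \emph{trivial disk systems}, and the cone must be placed over a sector with $k_i=0$: coning a covering map $Y_i\to (S^3,L)$ yields a manifold total space only when $Y_i=H_i\cup_\Sigma\overline{H_{i+1}}\cong S^3$. Placing cones over the $\natural^k(S^1\times B^3)$ sectors, and then ``merging'' cone points that live in different sectors of the trisection, is not something your argument supports.

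Second, and independently, the genuinely hard step is missing. After extending Hilden's map over the three handlebodies separately, the pairwise unions $L_i=\Tt_i\cup\overline{\Tt_{i+1}}$ are arbitrary colored links in bridge position, and a colored link need not bound any colored surface in $B^4$ at all, so the sectors cannot be capped off until these links are controlled. The proof requires modifying $\Tt_2$ and $\Tt_3$ rel boundary, \emph{without changing the covering handlebodies rel boundary}, so that $\Tt_1\cup\overline{\Tt_2}$ and $\overline{\Tt_1}\cup\Tt_3$ become unlinks; this is done via Piergallini's calculus of moves on colored tangles together with Waldhausen's theorem to identify the two Heegaard splittings of $\#^{k}(S^1\times S^2)$ (Theorem~\ref{thm:Pier_splitting}). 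Only then do the trivial-disk cappings exist, and the single leftover uncontrolled link is confined to the $S^3$ sector, where it can be coned. Your proposal defers all of this to ``combinatorial bookkeeping'' and ``choosing the transpositions compatibly,'' which does not address it: the three Hilden extensions cannot be chosen independently and expected to glue to disk systems. The same gap affects your last clause: arranging that $L_2''$ and $L_3''$ be \emph{knots} when two sectors are balls requires a further application of Theorem~\ref{thm:Hilden} to re-choose the pairing of the bridge points induced by $\Tt_3$, not merely ``enough room.''
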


There is a well-known correspondence between trisections and handle decompositions of smooth 4--manifolds (see, for example,~\cite[Section~4]{meier2016classification}), and this gives rise to the following corollary.

\begin{corox}
\label{corox:main}
	Let $X$ be a closed smooth 4--manifold built with no 1--handles. Then $X$ is an irregular 3--fold cover of $S^4$.
	The branching set $\Ss$ is a closed, connected surface which is smoothly embedded in $S^4$ away from one singular point, the cone on a link.
	
	If $X$ is a 4-manifold built with no 1--handles and no 3--handles, the branching set $\Ss'$ can be chosen embedded in $S^4$. In this case, there are two singular points, each the cone on a knot.
\end{corox}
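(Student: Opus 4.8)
The plan is to deduce Corollary~\ref{corox:main} directly from Theorem~\ref{thmx:main}, using the dictionary between handle decompositions and trisections of closed smooth 4--manifolds to convert the handle-theoretic hypotheses on $X$ into the existence of a trisection of the required type. I would first recall this dictionary~\cite[Section~4]{meier2016classification} (see also~\cite{gaykirby2016trisections}): from a handle decomposition of $X$ one builds a trisection by assembling the 0-- and 1--handles, the 2--handles, and the 3-- and 4--handles (the last read upside down) into the three pieces, stabilizing as needed; the pieces built from the 0-- and 1--handles and from the 3-- and 4--handles are boundary connected sums of copies of $S^1\times B^3$, with one $S^1\times B^3$ summand for each 1--handle and each 3--handle respectively, and these summands are exactly what the parameters $k_3$ and $k_2$ (in the labeling of Theorem~\ref{thmx:main}) record. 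Since conversely every trisection arises in this way, this yields the equivalence stated in the abstract: $X$ admits a $(g;k_1,k_2,0)$--trisection if and only if $X$ has a handle decomposition with no 1--handles; and, by the same token, $X$ admits a $(g;k_1,0,0)$--trisection (that is, one with $k_2=0$ in addition) if and only if $X$ has a handle decomposition with neither 1--handles nor 3--handles.

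Granting this, the corollary is immediate. For the first assertion: since $X$ is built with no 1--handles it carries a $(g;k_1,k_2,0)$--trisection $\T$, so Theorem~\ref{thmx:main} applies directly and exhibits $X$ as an irregular 3--fold cover of $S^4$ whose branching set $\Ss$ is a closed, connected surface, smoothly embedded in $S^4$ away from one singular point, the cone on a link. For the second assertion: if $X$ is moreover built with no 3--handles, then by the above it carries a $(g;k_1,0,0)$--trisection, and the second part of Theorem~\ref{thmx:main} then provides a branching set $\Ss'$ that can be chosen embedded in $S^4$, with exactly two singular points, each the cone on a knot. One can equally see the 3--handle reduction through Poincar\'e duality, turning the handle decomposition upside down so that 1--handles and 3--handles exchange roles.

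I do not anticipate a genuine obstacle here: the substantive content is entirely in Theorem~\ref{thmx:main}, and what remains is bookkeeping. The one point to treat with care is pinning down which of the trisection parameters $(g;k_1,k_2,k_3)$ is forced to vanish by the absence of 1--handles and which by the absence of 3--handles --- so that ``no 1--handles'' kills the last slot and, given that, ``no 3--handles'' kills the middle one --- which can be checked against the standard model trisections, for instance those of $S^1\times S^3$ and $\CP^2$.
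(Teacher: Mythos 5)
Your proposal is correct and follows essentially the same route as the paper: the paper's proof likewise invokes the correspondence between $(g;k_1,k_2,k_3)$--trisections and handle decompositions from~\cite[Section~4]{meier2016classification} and then applies Theorem~\ref{thmx:main} in the cases $k_3=0$ and $k_3=k_2=0$. Your extra care about which trisection parameter records the 1--handles and which records the 3--handles (up to the cyclic relabeling the paper performs before its proof of Theorem~\ref{thmx:main}) is exactly the right bookkeeping point, and nothing further is needed.
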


\begin{remark}
\label{rmk:euler}
	The Euler characteristic of the branching sets in Theorem~\ref{thmx:main} and Corollary~\ref{corox:main} is easily computed. In the case of one singularity, we have: $$\chi(\Ss)= k_1+k_2-g+3 =5-\chi(X).$$ 
	
	In the case where the branching set $\Ss'$ is embedded with two singularities, we have: $$\chi(\Ss')=k_1-g+2 = 4-\chi(X).$$
\end{remark}

\begin{proof}[Proof of Corollary~\ref{corox:main}]
	If $X$ admits a $(g;k_1,k_2,k_3)$--trisection, then $X$ admits a handle decomposition with  a single 0--handle, $k_3$ 1--handles,  $g-k_1$ 2--handles, $k_2$ 3--handles, and a single 4--handle~\cite[Section~4]{meier2016classification}. The statements of the corollary follow by applying Theorem~\ref{thmx:main} to this trisection in the cases where $k_3=0$ or $k_3=k_2=0$.
\end{proof}

In 1978, Montesinos showed that 4--manifolds with boundary which are built with 0--, 1--, and 2--handles arise as irregular 3--fold covers of $B^4$, branched along ribbon surfaces~\cite{montesinos1978}. 
The bridge trisected surface $\Ss$ produced in Theorem~\ref{thmx:main} becomes a ribbon surface in $B^4$ once an open neighborhood of the singular point is removed.  Thus, Theorem~\ref{thmx:main} can be viewed as a trisection-theoretic counterpart for closed 4--manifolds to the handle-theoretic Theorem~6 of~\cite{montesinos1978}.
A weaker version of the first statement of our theorem can be proved using Montesinos's techniques, showing that a {\it stabilization} of the given trisection $\T$ of $X$ covers $\T_0$.
The first statement of Corollary~\ref{corox:main} can be obtained using Theorem~6 of~\cite{montesinos1978}.

Families of 3--fold covers of $S^4$ by simply-connected manifolds where the branch sets are embedded with cone singularities are given in~\cite{cahnkjuchukova2017singbranchedcovers, cahnkju2018genus, blair2018simply}.
In these constructions, there is at most one singularity on each cover. Moreover, this singularity can be arranged to be the cone on a knot so the branching set is embedded.
It is an open question whether Theorem~\ref{thmx:main} can be strengthened to ensure that the singularity on the branching set is the cone on a knot or so that the singularity is removed altogether, yielding a branching set that is smooth.

Section~\ref{examples} of this paper contains some new examples of (singular) irregular 3--fold coverings of $S^4$.  While many standard manifolds can be represented by combining the examples in this section, one notable exception is K3.  In~\cite{LamMei_18_Bridge-trisections}, trisections for algebraic surfaces were introduced that satisfy the hypotheses Theorem~\ref{thmx:main} and can, therefore, be described as irregular 3--fold singular branched coverings.  It is natural to ask what the branch loci are realizing these trisections as covers of $\T_0$.

If the 11/8--Conjecture is true, then every simply-connected, closed, oriented, smooth  4--manifold is homeomorphic to an irregular 3--fold branched cover of $S^4$, as we show next.
Consider the collection $\Mm$ of 4--manifolds that contains $\CP^2$, $S^2\times S^2$, and K3 and is closed under the operations of taking connected sums and reversing orientations. Each member of $\Mm$ admits a $(g,0)$--trisection~\cite{LamMei_18_Bridge-trisections,SprTil}. Every smooth, simply-connected 4--manifold that satisfies the 11/8--Conjecture is homeomorphic to a member of $\Mm$~\cite{freedman1982topology,FreQui90}; for details, see~\cite[Section~1.2]{GomSti}. Thus, we obtain to following result.

\begin{corox}
\label{corox:types}
	Every simply-connected, closed, oriented, smooth  4--manifold $X$ with $b_2(X)\geq \frac{11}{8}|\sigma(X)|$ is homeomorphic to an irregular 3--fold branched cover of the 4--sphere.
\end{corox}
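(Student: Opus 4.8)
The plan is to combine Theorem~\ref{thmx:main} with Freedman's classification of closed, simply-connected topological 4--manifolds, using that the trisection-theoretic hypothesis of Theorem~\ref{thmx:main} is met by every member of $\Mm$.

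First I would invoke the reduction recorded just before the statement: the hypothesis $b_2(X)\geq\frac{11}{8}|\sigma(X)|$, combined with the classification of indefinite unimodular symmetric bilinear forms and Rokhlin's theorem (treating separately the cases of odd and even intersection form), forces the intersection form of $X$ to be isomorphic, up to orientation, to that of a connected sum of copies of $\CP^2$, $S^2\times S^2$, and K3. Since $X$ is closed, oriented, and simply connected, Freedman's theorem~\cite{freedman1982topology,FreQui90} then produces a homeomorphism $X\cong Y$ with $Y\in\Mm$; see~\cite[Section~1.2]{GomSti} for this step.

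Next I would check that every $Y\in\Mm$ admits a trisection satisfying the hypotheses of Theorem~\ref{thmx:main}. Each generator admits a $(g;0,0,0)$--trisection --- of genus $1$, $2$, and $22$ for $\CP^2$, $S^2\times S^2$, and K3, respectively~\cite{LamMei_18_Bridge-trisections,SprTil} --- and the class of 4--manifolds admitting a $(g;0,0,0)$--trisection is closed under connected sum, since a $(g_1;0,0,0)$-- and a $(g_2;0,0,0)$--trisection glue to a $(g_1+g_2;0,0,0)$--trisection, and under orientation reversal, which merely reverses the cyclic order of the three sectors and fixes the common value $0$ of the parameters. Thus every $Y\in\Mm$ admits a $(g;0,0,0)$--trisection, which is in particular a $(g;k_1,k_2,0)$--trisection, so Theorem~\ref{thmx:main} applies to $Y$ and exhibits it as an irregular 3--fold cover of $S^4$ branched over a closed surface, smoothly embedded away from a single cone point. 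Composing with the homeomorphism $X\cong Y$ gives the conclusion. Note that only the first statement of Theorem~\ref{thmx:main} is used, so the embeddedness refinement (which requires $k_2=0$) is not needed here.

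The genuinely nontrivial input is Freedman's theorem together with the unimodular-form bookkeeping of~\cite[Section~1.2]{GomSti}; the remaining work --- verifying that the trisection parameter in the third slot stays equal to $0$ under connected sum and orientation reversal, so that Theorem~\ref{thmx:main} applies directly rather than only after stabilization --- is routine.
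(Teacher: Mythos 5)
Your proof is correct and follows essentially the same route as the paper: the paper's argument is precisely the paragraph preceding the statement, which uses Freedman's classification (via \cite[Section~1.2]{GomSti}) to show $X$ is homeomorphic to a member of $\Mm$, notes that every member of $\Mm$ admits a $(g,0)$--trisection, and applies Theorem~\ref{thmx:main}. The extra verification you supply---that $(g,0)$--trisections persist under connected sum and orientation reversal---is the content the paper delegates to its remark on additivity of trisection parameters and to the citations \cite{LamMei_18_Bridge-trisections,SprTil}.
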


On the other hand, there is no hope to realize {\it all} homeomorphism types of smooth 4--manifolds as 3--fold covers over the 4--sphere, even if arbitrarily complicated branching sets are allowed. A lower bound on the degree of a branched cover $f\colon X\to S^4$ is obtained from the length of the reduced cohomology ring of $X$ with rational coefficients~\cite[Theorem~2.5]{berstein1978degree}. It follows that a branched cover of $S^4$ by the 4--torus $T^4$ has degree at least four. In turns out that every closed, oriented, smooth 4--manifold is a simple 4--fold cover of the 4--sphere~\cite{piergallini1995four}.
It would be interesting to have a version of Theorem~\ref{thmx:main} for simple 4--fold covers in which the branch locus is found to be immersed and non-singular, in agreement with~\cite{iori2002}.  Branched covers of trisections in which the branch locus is immersed and singular are studied in~\cite{LamMeiSta_20_Symplectic-4-manifolds-admit}, where they are used to construct trisections for symplectic 4--manifolds that are compatible with the ambient symplectic structure.

Exotic pairs of 4--manifolds admitting $(g,0)$--trisections are constructed in~\cite{LamMei_18_Bridge-trisections}. By our Theorem~\ref{thmx:main}, these manifolds are irregular 3--fold branched covers of the 4--sphere. Moreover, a 4--manifold admits a $(g;k_1,k_2,0)$--trisection if and only it can be built without 1--handles~\cite{meier2016classification}. It is an open question whether every closed simply-connected 4--manifold admits a $(g,0)$--trisection or a handle decomposition of this type~\cite{LamMei_18_Bridge-trisections}.  This motivates the following question.

\begin{question}
\label{ques:3-fold}
	Is every simply-connected, smooth 4--manifold an irregular 3--fold cover of $S^4$ with branch set a surface that is smooth and embedded away from one singular point?
\end{question}

In light of our main result, a negative answer to Question~\ref{ques:3-fold} would amount to discovering a simply-connected 4--manifold that has no handle decomposition without 1--handles. Thus, the above is a formulation of Problem 4.18 on the Kirby List~\cite{Kir_78_Problems-in-low-dimensional} in terms of the classification of 3--fold irregular branched covers of $S^4$.

\subsection*{Acknowledgements.}
This work was started at the AIM workshop ``Symplectic four-manifolds through branched coverings". It was partly completed while A.K. and J.M. were visiting the MPIM in Bonn. We are grateful to American Institute of Mathematics and the Max Planck Society for their support. The authors are partially supported by NSF grants DMS-1821254, DMS-1821212, DMS-1821257 and DMS-1933019, respectively.

\section{Background}
\label{sec:back}

In this section, we review the requisite background material relating to trisections, singular surfaces, and branched covers. 

\subsection{Trisections}
\label{subsec:tris}

The theory of trisections was introduced by Gay and Kirby~\cite{gaykirby2016trisections} who showed that every smooth, orientable, closed, connected 4--manifold admits a trisection, defined below. We also refer the reader to~\cite{Gay-notes,meier2016classification} for more thorough introductions.

A \emph{$(g;k_1,k_2,k_3)$--trisection} of $X$ is a decomposition $X=Z_1\cup Z_2\cup Z_3$ such that
\begin{enumerate}
	\item $Z_i\cong \natural^{k_i}(S^1\times B^3)$,
	\item $H_i = Z_i\cap Z_{i+1} \cong \natural^g(S^1\times D^2)$, and
	\item $\Sigma = Z_1\cap Z_2\cap Z_3 \cong \#^g(S^1\times S^1)$.
\end{enumerate}
The parameter $g$ is the \emph{genus} of the trisection, and the surface $\Sigma$ is called the \emph{core}.
The subcomplex $H_1\cup H_2\cup H_3$ is called the \emph{spine}, and the 4--dimensional pieces $Z_i$ are the \emph{sectors}. An important feature of the theory is that the spine of a trisection determines the ambient 4--manifold.
In the case that the $k_i=k$ for each $i$, we call the trisection \emph{balanced} and refer to it as a $(g,k)$--trisection; otherwise, the trisection is called \emph{unbalanced}.  

The notion of an unbalanced trisection and the connection with handlebody decompositions was explored in~\cite{meier2016classification}.
It is shown that a 4--manifold that admits a $(g;k_1,k_2,k_3)$--trisection can be built with one 0--handle, $k_1$ 1--handles, $g-k_2$ 2--handles, $k_3$ 3--handles, and one 4--handle.
Thus, if $k_i=0$ for some $i=1$, 2, or 3, then $X$ is simply-connected.

\begin{remark}
	 The trisection parameters $(g;k_1,k_2,k_3)$ are additive under connected sum of trisected 4--manifolds~\cite{gaykirby2016trisections}. The 4--manifolds $S^4$, $\pm\CP^2$, $S^2\times S^2$ and K3 admit $(g,0)$--trisections with $g=0$, 1, 2 and 22, respectively~\cite{gaykirby2016trisections, LamMei_18_Bridge-trisections,SprTil}. 	
	 As before, we denote the  $g=0$ trisection of $S^4$ by~$\T_0$. 
\end{remark}

\subsection{Singular bridge trisections and branched covers of $S^4$}
\label{subsec:sing-bt}

We define the branched covers constructed in the proof of Theorem~\ref{thmx:main}.

A surface $\Ss\subset S^4$ is \emph{singular} provided that
\begin{enumerate}
	\item $\Ss$ is smoothly embedded in $S^4$ away from finitely many points; 
	\item in a neighborhood of each non-smooth point, $\Ss$ is given as the cone on a smooth link in $S^3$.
\end{enumerate}
The surface $\Ss$ is embedded in $S^4$ when its singularities are cones on knots.

The notion of a bridge trisection was introduced in~\cite{mz-bridge1} and extended in~\cite{mz-bridge2}.
We adopt the following definition of a bridge trisection for a singular surface in $S^4$.

\begin{definition}
	Let $\Ss\subset S^4$ be a singular surface.  We say that $\Ss$ is in \emph{$b$--bridge trisected position} with respect to the genus zero trisection $\T_0$ if for each $i\in\Z_3$
	\begin{enumerate}
		\item $\Tt_i = H_i\cap\Ss$ is a trivial, $b$--strand tangle;
		\item $\Dd_i = Z_i\cap \Ss$ is either the cone on a  link or a trivial disk system, that is, a collection of properly embedded, smooth 2--disks in $Z_i\cong B^4$ which are simultaneously boundary parallel.
	\end{enumerate}
	The decomposition
	$$(S^4,\Ss) = (Z_1,\Dd_1)\cup(Z_2,\Dd_2)\cup(Z_3,\Dd_3)$$
	is called a \emph{singular $b$--bridge trisection}.
\end{definition}

Note that, as defined, a singular surface that is in bridge trisected position has at most three singularities. This suffices for our branched cover construction. It is possible to generalize the definition, allowing transversely immersed surfaces with any number of components, and with any number of singularities, to be put in bridge trisected position. 
We refer the reader to~\cite{cahnkjuchukova2017singbranchedcovers,LamMei_18_Bridge-trisections, cahnkju2018genus, LamMeiSta_20_Symplectic-4-manifolds-admit} for related discussions about singular branched coverings of bridge trisected surfaces.

\begin{definition}
\label{def:sing-br}
	Let $\Ss \subset S^4$ be a singular surface.  A map $f\colon X\to S^4$ is called a \emph{singular branched cover} of $S^4$ with \emph{branch set} $\Ss$ if the following two conditions hold:
	\begin{enumerate}
		\item Away from the union of small 4--ball neighborhoods around each of the singular points, $f$ is a branched covering map.
		\item In a 4--ball neighborhood of a singular point, $f$ is the cone on a branched covering map from a 3--manifold $M$ to $S^3$.
	\end{enumerate}
\end{definition}

In the above definition, for each neighborhood of a singular point $z\in \Ss$, the manifold $M$ is the cover of $S^3$ branched along the link describing the singularity $z$. When the total space $X$ is a manifold, that is, in all of our constructions, $M$ is necessarily homeomorphic to $S^3$. The general case, where $X$ is a stratified space with isolated singularities, is studied in~\cite{geske2018signatures}. We also refer the reader to~\cite{berstein1978degree,Zud} for broad overviews of branched coverings. 

\begin{definition}
\label{def:tri_branch}
	Given a branched covering $\pi\colon X\to X'$ and trisections $\T$ and $\T'$ for $X$ and $X'$, respectively, we say that \emph{ $\T$ is a (singular) branched cover of $\T'$} if, for each $i\in\Z_3$
	\begin{enumerate}
		\item $\pi(Z_i)=Z_i'$;
		\item $\pi\vert_{Z_i}\colon Z_i\to Z_i'$ is a (singular) branched cover; and
		\item the (singular) branch locus of $\pi$ is in bridge position with respect to $\T'$.
	\end{enumerate} 
\end{definition}

Note that criterion (3) is essential when dealing with irregular branched coverings, since, for example, there are 2--strand tangles that are not trivial, but that have $B^3$ as an irregular 3--fold cover.
Even if the branched cover is cyclic, criterion (3) is still required, since there are disks in $B^4$ bounded by the unknot whose 2--fold cover is $B^4$~\cite{Gor_74_On-the-higher-dimensional-Smith-conjecture}.

\subsection{Fox colorings and Hilden's map}
\label{subsec:Fox-Hilden}

A branched cover is determined by its ordinary covering behavior away from the branching locus~\cite{fox1957covering}, which in turn arises from a homomorphism with domain the fundamental group of the branch set complement. When considering (connected) 3--fold irregular covers of $S^3$ with branching set a link $L$, the homomorphism in question maps onto $D_3$, the dihedral group of order 6. Meridians of $L$ are sent to reflections. As is well known, a homomorphism $\rho\colon \pi_1(S^3\backslash L)\twoheadrightarrow D_3$ can be represented by a Fox 3--coloring of $L$. We denote the three ``colors" by 1, 2 and 3, each identified with a reflection in $D_3$. A color can be assigned to every meridian of $L$, according to its image under $\rho$. In a diagram, each arc is colored by the image of its Wirtinger meridian. A Fox 3--coloring of a singular tri-plane diagram is defined in an analogous way, and it determines an irregular 3--fold cover of $S^4$ branched along a singular surface~\cite{cahnkjuchukova2017singbranchedcovers}. 

\begin{lemma}\label{lemma:unlink-lift}
	Suppose that $L$ is a $(k+2)$--component unlink with one component colored ``1'' and the rest colored ``2''.  Let $f\colon Y\to S^3$ denote the irregular 3--fold covering corresponding to this coloring.  Then $Y\cong\#^k(S^1\times S^2)$.  Moreover, for $g\geq k$ there exists a $(g+2)$--bridge position for $L$ such that the bridge sphere lifts to a genus $g$ Heegaard surface for $Y$.
\end{lemma}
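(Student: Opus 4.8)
The plan is to prove the two assertions separately. For the homeomorphism type of $Y$, I would induct on $k$, splitting off one ``$2$''--colored component at a time. Write $L = U_0 \sqcup U_1 \sqcup \dots \sqcup U_{k+1}$ with $U_0$ colored ``$1$'' and $U_1, \dots, U_{k+1}$ colored ``$2$'', and choose an embedded $S^2 \subset S^3$ separating $U_{k+1}$ from the rest, so that $S^3 = B \cup_{S^2} B'$ with $U_0, \dots, U_k \subset \Int(B)$ and $U_{k+1} \subset \Int(B')$. Over $B'$ the monodromy is a single reflection of $D_3$, so the restricted cover is a copy of $B^3$ (the fixed sheet, mapped homeomorphically) together with the double branched cover of $B^3$ along an unknotted circle in its interior; the latter is $S^2 \times I$, as one sees by restricting the double branched cover $S^3 \to S^3$ of the unknot to the ball containing it. Over the separating sphere the cover is three trivial copies of $S^2$, and over $B$ it is the analogous cover $Y'$ for the $(k+1)$--component colored unlink with three open balls --- one per sheet --- removed. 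Reassembling, the $B^3$ over $B'$ caps off one of these spheres while the $S^2 \times I$ joins the other two; this is precisely the operation of forming $Y' \# (S^1 \times S^2)$. Since by induction $Y' \cong \#^{k-1}(S^1 \times S^2)$, we get $Y \cong \#^k(S^1 \times S^2)$. The base case $k = 0$ --- where $L$ is a two--component unlink with components colored ``$1$'' and ``$2$'' --- is handled by the same splitting argument and gives $Y \cong S^3$.

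For the Heegaard surface, fix $g \geq k$ and first put $L$ in $(g+2)$--bridge position with bridge sphere $S^2 = \partial B_+ = \partial B_-$ so that every component of $L$ meets $S^2$; this is possible by starting from the standard $(k+2)$--bridge position (each $U_j$ a round circle meeting $S^2$ in two points) and perturbing $g - k$ times. Write $T_\pm = L \cap B_\pm$; these are trivial $(g+2)$--strand tangles carrying the induced Fox coloring. Because $U_0$ contributes a ``$1$''--colored strand and some $U_j$ with $j \geq 1$ contributes a ``$2$''--colored strand to each of $T_+$ and $T_-$, the monodromy of the restricted cover over each $B_\pm$ hits two distinct reflections and hence surjects onto $D_3$, so $f^{-1}(B_\pm)$ is connected. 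The key input is the classical fact, due to Hilden, that a $3$--fold irregular (reflection--colored) branched cover of a trivial tangle in $B^3$ is a handlebody; I would either invoke this directly or prove it by induction on the number of strands, separating one strand off by a properly embedded disk and tracking how the three sheets over that disk assemble the covers of the two sides. Granting it, $f^{-1}(B_\pm)$ is a handlebody, and an Euler characteristic count --- the cover has $\chi = 3\,\chi(B^3) - (g+2) = 1-g$, since each of the $g+2$ branch arcs lowers $\chi$ by one --- shows its genus is $g$. The same count applied to the bridge sphere, which meets $L$ in $2(g+2)$ points, gives $\chi(f^{-1}(S^2)) = 6 - 2(g+2) = 2 - 2g$; as $f^{-1}(S^2)$ is connected (it bounds the connected handlebody $f^{-1}(B_+)$), it is a closed genus--$g$ surface splitting $Y$ into the two handlebodies $f^{-1}(B_\pm)$, hence a genus--$g$ Heegaard surface for $Y$.

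The main obstacle is the handlebody statement for branched covers of trivial tangles; once that is in hand, both parts reduce to the gluing bookkeeping above and to routine Euler characteristic computations. One should also dispatch the minor points that the $g-k$ perturbations can be arranged to keep every component of $L$ incident to the bridge sphere, and that the two descriptions of $Y$ are consistent --- which they are, being the same cover $f$ viewed through two decompositions of $S^3$.
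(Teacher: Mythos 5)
Your proposal is correct, and the two halves of it relate differently to the paper's proof. For the second assertion (the genus--$g$ Heegaard surface), you argue exactly as the paper does: put $L$ in $(g+2)$--bridge position, invoke the standard fact that the irregular $3$--fold cover of a trivial tangle is a handlebody, and read off the genus from the Euler characteristic of the preimage of the bridge sphere ($\chi = 6 - 2(g+2) = 2-2g$); your extra care about connectivity of the preimages (the monodromy over each ball hits two distinct reflections) is a point the paper leaves implicit. For the first assertion ($Y\cong\#^k(S^1\times S^2)$), however, you take a genuinely different route. The paper simply runs the same bridge-position argument with $g=k$ using the crossingless $(k+2)$--bridge position: the resulting genus--$k$ Heegaard splitting of $Y$ is the double of a handlebody (since $L=\Tt_1\cup\overline{\Tt_1}$ and the cover of the trivial tangle is a handlebody), and the double of a genus--$k$ handlebody is $\#^k(S^1\times S^2)$. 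You instead induct on $k$, splitting off one ``2''--colored unknot at a time with a separating sphere and identifying the cover over that side as $B^3\sqcup(S^2\times I)$, so that reassembly is a self-connected-sum contributing one $S^1\times S^2$ summand. Your computation over each piece is right (the double branched cover of $B^3$ over an interior unknot is indeed $S^2\times I$, and the base case assembles to $S^3$); the one point worth recording is that gluing a tube between two boundary spheres could a priori produce the twisted $S^2$--bundle over $S^1$, which is ruled out because $Y$ is orientable. The paper's argument is shorter and reuses the handlebody machinery it needs anyway; yours is more elementary and exhibits each $S^1\times S^2$ summand explicitly, at the cost of the induction and the orientation check.
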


\begin{proof}
	Given $L$ as above, denote by $f\colon Y\to S^3$ the irregular 3--fold branched cover induced by the specified coloring on the branching set $L$. We consider two Heegaard splittings for $Y$, corresponding to two bridge positions for $L$. 

	First, let $\Tt_1$ be a $(k+2)$--strand trivial tangle represented by a diagram with no crossings. Let one arc of $\Tt_1$ be colored ``1'' and the remaining $k+1$ arcs colored ``2''. Doubling  $\Tt_1$ along its boundary gives a $(k+2)$--bridge decomposition of $L = \Tt_1\cup \overline{\Tt_1}$, with bridge sphere $S^2_1$. The link $L$ meets $S^2_1$ in $2k+4$ points which are branch points for $f$. Each of these branch points has two pre-images under the 3--fold cover $f$, so it follows that the Euler characteristic of $\Sigma_1 = f^{-1}(S^2_1)$ is $2-2k$. Since the branched cover of a trivial tangle is a handlebody, we have that $\Sigma_1$ is a genus $k$ Heegaard surface for $Y$. Moreover, this Heegaard splitting for $Y$ is obtained by doubling a genus $k$ handlebody along its boundary. Therefore, $Y\cong \#^k(S^1\times S^2)$.

	Next, let $g\geq k$ and endow the link $L$ with the same coloring as before, inducing the same branched covering space $Y$. Furthermore, assume $L$ is in $(g+2)$--bridge position with respect to a bridge sphere $S^2_2$.
	As before, we can compute the Euler characteristic of $\Sigma_2 = f^{-1}(S^2_2)$ and conclude that $\Sigma_2$ is a genus $g$ Heegaard surface for $Y$.
\end{proof}

For concreteness, we illustrate the standard $(g+2)$--bridge position for the $k$--component unlink $L$, together with the 3--coloring described above, in Figure~\ref{fig:unlinkbridge}.

\begin{figure}[h!]
	\includegraphics[scale=.5]{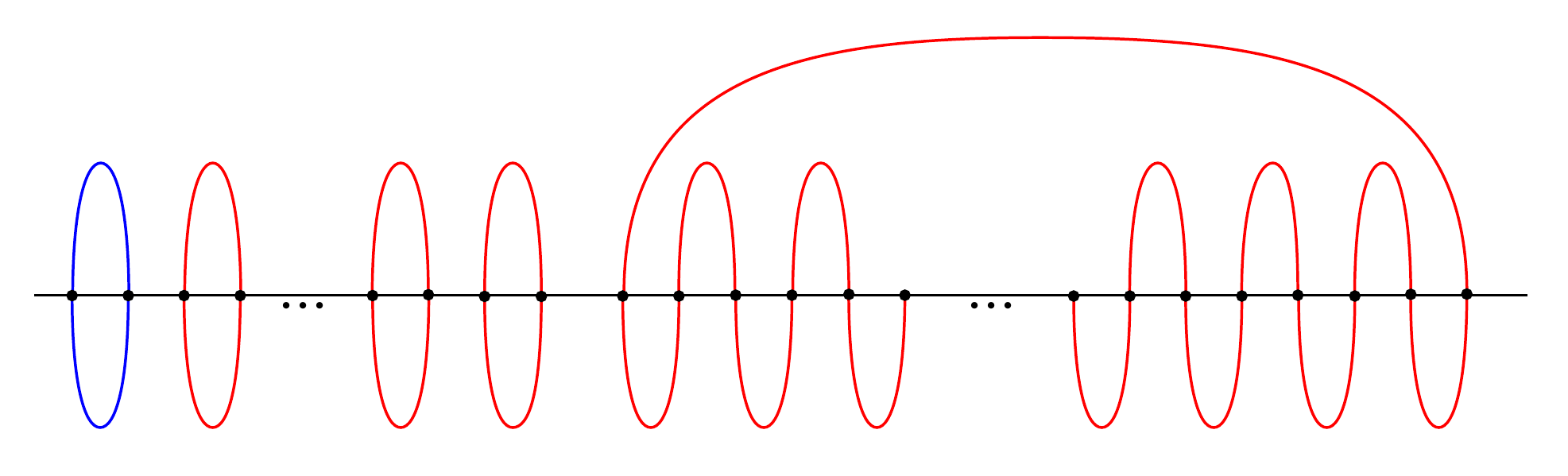}
	\caption{A 3--coloring of $(k+2)$-component unlink $L$ in $(g+2)$--bridge position, with $g>k$.} 
	\label{fig:unlinkbridge} 
\end{figure}

\section{Proof of the main theorem}
\label{sec:proof}

In this section, we summarize several results in Hilden's work~\cite{hilden1974every} and prove our main theorem.  Given an oriented manifold $X$ or a pair of oriented manifolds $(X,Y)$, we let $\overline X$ and $\overline{(X,Y)}$ denote the same objects, but with the orientation reversed.

\subsection{Revisiting Hilden's result}
\label{subsec:Hilden}

We restate the main results of Hilden's paper~\cite{hilden1974every} in language that is well-suited for carrying out our generalization to dimension four. The statement below follows from Theorems~8, ~9 and ~10 of~\cite{hilden1974every}, and their proofs.  We denote by  $\pi \colon \Sigma_g \to S^2$ the standard covering map described by Hilden.
This map realizes the genus $g$ surface as an irregular 3--fold covering of $S^2$ with a branching set consisting of $2g+4$ points, whose union we denote by~$\bold x$. The map $\pi$ is constructed starting with a genus $3g+1$ handlebody $H_{3g+1}$ and a 6--fold cover $H_{3g+1}\to B^3$ with an action by the dihedral group of order six. A $\Z/2\Z$ quotient of this map defines the irregular 3--fold cover $H_g\to B^3$ whose boundary is the map $\pi$; see Figure~\ref{hildenmap}.

\begin{figure}[htbp] 
	\includegraphics[width=4in]{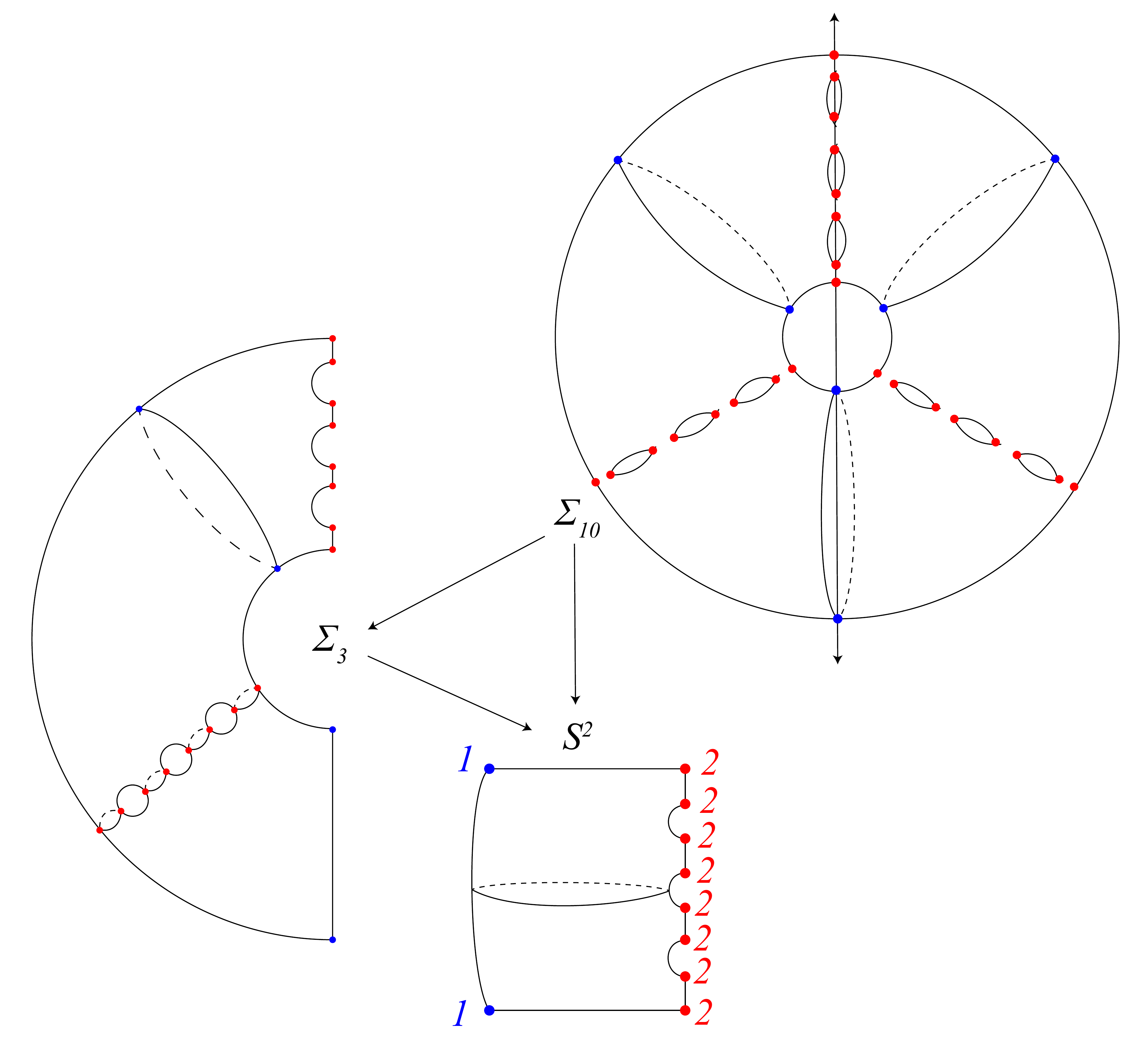}
	\caption{Hilden's 3--fold irregular cover of $S^2$ by a surface $\Sigma_3$ of genus 3.  $\Sigma_3$ is a $\mathbb{Z}/2\mathbb{Z}$ quotient of a 6--fold regular dihedral cover $\Sigma_{10}$ of $S^2$.}
	\label{hildenmap}
\end{figure}

\begin{theorem}[Hilden~\cite{hilden1974every}]
\label{thm:Hilden}
	Let $H$ be any genus $g$ handlebody with $\partial H = \Sigma_g$. Denote Hilden's covering map by $\pi\colon \Sigma_g \to S^2$.
	Given a pairing $P$ of the branch points $\bold x$ in $S^2$, there exists an extension $\widehat{\pi}\colon H\rightarrow B^3$ of $\pi$ whose branch set is a trivial tangle $\Tt$ inducing the pairing~$P$.
\end{theorem}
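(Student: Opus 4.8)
The plan is to produce one explicit extension directly out of Hilden's model and then transport it, using only diffeomorphisms of the pair $(S^2,\bold x)$ that lift through $\pi$, until both the handlebody and the pairing have been adjusted to the prescribed data. For the base case, I would observe that the construction recalled above yields not merely the surface cover $\pi\colon\Sigma_g\to S^2$ but a $3$--fold irregular branched cover $\widehat\pi_0\colon H_0\to B^3$: the $\Z/2\Z$--quotient of the $D_3$--equivariant $6$--fold cover $H_{3g+1}\to B^3$ is a cover of a genus $g$ handlebody $H_0$ with $\partial H_0=\Sigma_g$ and $\widehat\pi_0|_{\partial H_0}=\pi$, and its branch set is visibly a trivial tangle $\Tt_0$---the quotient of the explicit boundary--parallel $D_3$--invariant tangle in $B^3$---inducing some particular pairing $P_0$ of $\bold x$.

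Next I would set up a symmetry action. Let $\Gamma\le\Mod(S^2,\bold x)$ be the subgroup of mapping classes admitting a lift to $\Mod(\Sigma_g)$ that commutes with $\pi$; since the monodromy group is $S_3$ and a point stabilizer is self--normalizing there, the irregular $3$--fold cover has trivial deck group, so the lift $\widetilde\phi$ is unique and $\phi\mapsto\widetilde\phi$ is a homomorphism. Fix an extension of each $\phi\in\Gamma$ to $\widehat\phi\in\mathrm{Diff}(B^3)$. Given any extension $\widehat\pi\colon(H,\Sigma_g)\to(B^3,S^2)$ of $\pi$ whose branch set is a trivial tangle $\Tt$ realizing a pairing $P$, the composite $\widehat\phi\circ\widehat\pi$ is again a $3$--fold branched cover of $B^3$; its branch set is the trivial tangle $\widehat\phi(\Tt)$, realizing $\phi_\ast P$; and after re--marking $\partial H$ by $\widetilde\phi$ it is an extension of $\pi$ over the handlebody determined by the coset $\widetilde\phi\,\mathcal{H}_g$, where $\mathcal{H}_g\le\Mod(\Sigma_g)$ is the handlebody group of $H_0$. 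Thus $\Gamma$ acts on ``Hilden extensions,'' changing the underlying handlebody by left translation on $\Mod(\Sigma_g)/\mathcal{H}_g$ and the pairing by the permutation $\phi|_{\bold x}$.

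It would then suffice to carry the base datum $(H_0,P_0)$ to an arbitrary $(H,P)$ by elements of $\Gamma$, which is exactly the combined content of Hilden's Theorems 8--10; I would split this in two. First, the lifts $\{\widetilde\phi:\phi\in\Gamma\}$ meet every coset of $\mathcal{H}_g$ in $\Mod(\Sigma_g)$, so that $\pi$ extends over every genus $g$ handlebody with boundary $\Sigma_g$; one move of $\Gamma$ thus arranges the underlying handlebody to be the given $H$, at the cost of passing to some intermediate pairing $P'$. Second, among the $\phi\in\Gamma$ fixing the current handlebody, the induced permutations $\phi|_{\bold x}$ generate the full symmetric group on $\bold x$: the half--twist exchanging two branch points of the same colour lifts through $\pi$ directly, the half--twist on two points of different colours lifts after a suitable power while still realizing the transposition of those points, and such half--twists can be isotoped off a system of meridian disks for the current handlebody; since any two pairings of $\bold x$ differ by a product of transpositions of points, a second move corrects $P'$ to $P$. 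The composite lies in $\Gamma$, and its associated Hilden extension is the desired $\widehat\pi$.

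The hard part will be the first of these two ingredients: showing that liftable mapping classes are abundant enough to realize every genus $g$ handlebody with boundary $\Sigma_g$. This is not formal, and it is where the explicit model is indispensable---one must exhibit, for a Lickorish-- or Humphries--type generating family of Dehn twists on $\Sigma_g$ (or an equivalent spanning set of handlebody--changing moves), representatives that descend through $\pi$ to diffeomorphisms of $(S^2,\bold x)$, by tracking the $D_3$--action on $H_{3g+1}$ and its $\Z/2\Z$--quotient. Verifying that each candidate really is liftable, with orientation--preserving lift acting correctly on $\bold x$, is the only genuinely delicate point; the rest of the argument then assembles mechanically.
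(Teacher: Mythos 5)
First, a point of orientation: the paper does not actually prove this statement --- it is quoted from Hilden, with the proof deferred entirely to Theorems~8--10 of~\cite{hilden1974every}. So your proposal must be judged against Hilden's argument rather than against anything written here. Your architecture is a faithful reconstruction of that argument: one explicit base extension $\widehat\pi_0\colon H_0\to B^3$ obtained as the $\Z/2\Z$--quotient of the dihedral $6$--fold cover, followed by an action of the liftable subgroup $\Gamma\le\Mod(S^2,\bold x)$ that simultaneously translates the handlebody coset in $\Mod(\Sigma_g)$ modulo the handlebody group and permutes the pairing. The formal parts are right: the deck group of the irregular cover is trivial, so lifts are unique and $\phi\mapsto\widetilde\phi$ is a homomorphism, and the bookkeeping for $\widehat\phi\circ\widehat\pi$ after re-marking by $\widetilde\phi$ is correct.

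The difficulty is that both load-bearing claims are asserted rather than proved. (1) The claim that the lifts of $\Gamma$ meet every coset of the handlebody group --- equivalently, that every mapping class of $\Sigma_g$ is a fiber-preserving class times one extending over the handlebody --- \emph{is} the theorem; it is exactly what Hilden establishes by exhibiting liftable representatives for a Lickorish-type generating set, and you explicitly postpone it as ``the hard part.'' As written, the proposal is therefore an outline of Hilden's proof rather than a proof. (2) The pairing step is shakier than you acknowledge, and it does not ``assemble mechanically.'' A half-twist exchanging two branch points whose local monodromies are distinct transpositions does not lift; the relevant liftable power is the cube (this is the $3$--move of Montesinos), which does still realize the transposition, but your justification for compatibility with the handlebody --- that such half-twists ``can be isotoped off a system of meridian disks for the current handlebody'' --- is not checkable for an arbitrary $H$. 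The fact that rescues this step, and which you would need to isolate and prove, is that when the two colours differ the preimage under $\pi$ of a disc containing the two points is a single disc, so the lift of the cubed half-twist is isotopically trivial on $\Sigma_g$ and hence extends over \emph{every} handlebody; by contrast, for two points of the same colour the lift is a genuine Dehn twist on the annular preimage component and need not extend over $H$. One must then verify that the transpositions so obtained --- whose available supply changes as the colouring of $\bold x$ changes under these moves --- act transitively on the set of pairings. Without these two verifications the argument does not close up; note also that Hilden's own route to the pairing statement is a direct construction of coloured boundary-parallel tangles realizing a prescribed pairing over the standard handlebody, which avoids the stabilizer analysis altogether and may be the easier path.
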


\subsection{Normalized bridge splittings and Piergallini moves}
\label{subsec:pierg}

Let $L_1$ and $L_2$ be two 3--colored link diagrams representing the same 3--manifold as a branched cover of $S^3$. Piergallini gave a set of four moves that suffice to convert $L_1$ to $L_2$, up to colored isotopy~\cite[Figures~1 and~7]{pierg91}.
The moves are applied to {\it normalized diagrams}. A colored link diagram is {\it normalized} if it is the plat closure of a braid, with maxima and minima as shown in Figure ~\ref{fig:normalized}; the leftmost maximum and minimum are colored ``1'', and all remaining extrema are colored ``2''. We say a link, together with a choice of bridge splitting and representation $\pi_1(S^3-L)\twoheadrightarrow D_3$, is {\it normalized} if it admits a normalized diagram.  While Piergallini's moves are defined on diagrams \cite{pierg91}, they can as well be applied to links in normalized bridge position in the obvious way. 

\begin{figure}[htbp]
	\includegraphics[width=2in]{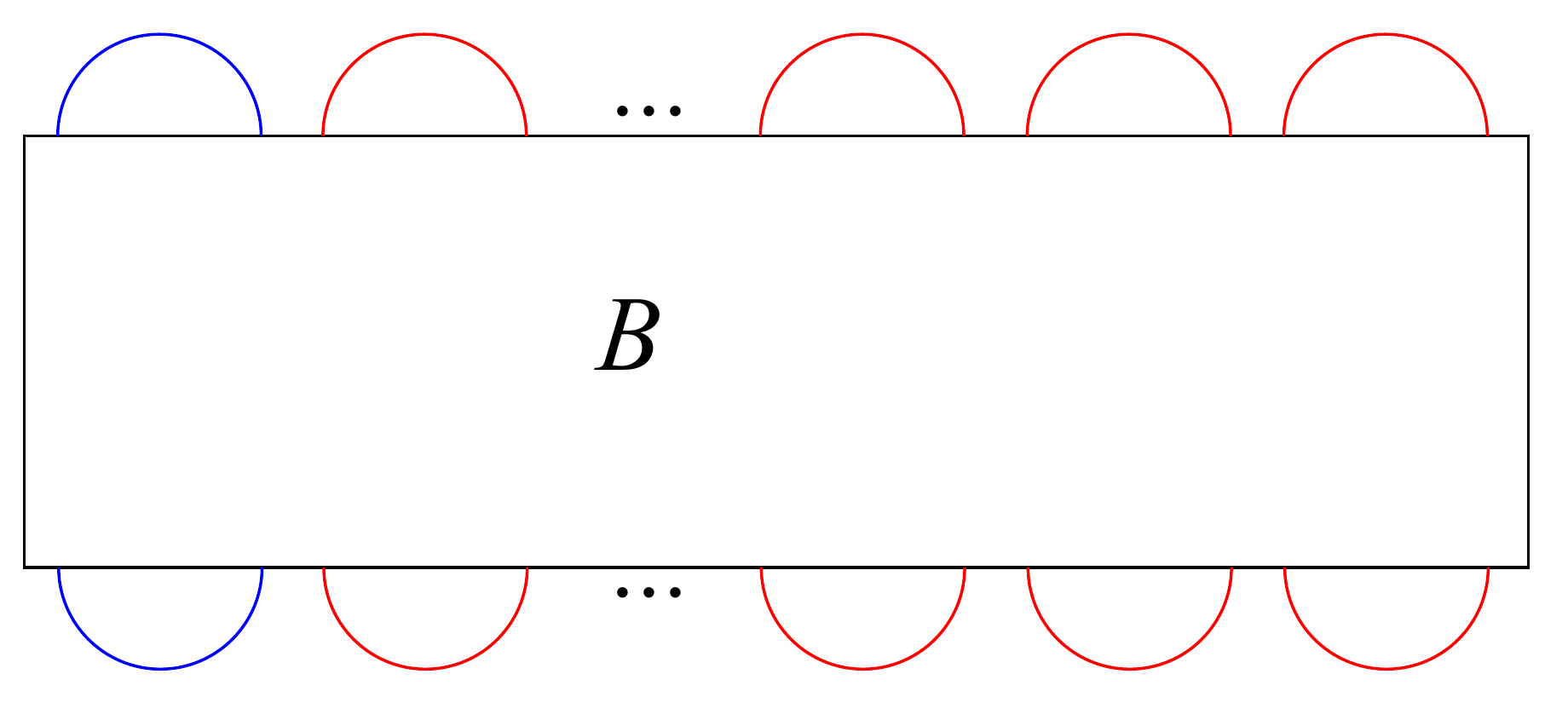}

	\caption{A normalized link diagram.  $B$ is a 3--colored braid.}	\label{fig:normalized}
\end{figure}

The following theorem follows from Piergallini's proof in ~\cite{pierg91}. 

\begin{theorem}
\label{thm:Pier_splitting}
	Suppose that $H_1\cup_\Sigma\overline{H_2}$ and $H_1'\cup_{\Sigma'}\overline{H_2'}$ are two homeomorphic Heegaard splittings of (homeomorphic) 3--manifolds $M$ and $M'$ that are respectively given as irregular 3--fold covers of normalized bridge splittings $(S^3,L) = (B_1,\Tt_1)\cup_{(S,x)}\overline{(B_2,\Tt_2)}$ and $(S^3,L') = (B_1',\Tt_1')\cup_{(S',\bold x')}\overline{(B_2',\Tt_2')}$ of links $L$ and $L'$ in $S^3$.  Then $(B_2,\Tt_2)$ can be transformed via Piergallini moves and isotopy rel-$\partial$ to a tangle $(B_2,\Tt_2'')$ such that $L'' = \Tt_1\cup_\bold x\overline{\Tt_2''}$ is isotopic to $L'$.
\end{theorem}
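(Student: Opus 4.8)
The plan is to extract the statement from a careful reading of Piergallini's argument in~\cite{pierg91}, keeping track of \emph{where} in $S^3$ his moves take place so that they can be confined to the ball $B_2$. I would begin by using normalization to place both covers in a common standard form. A normalized bridge splitting has its top tangle equal to the fixed plat tangle $\Tt_0$ of nested caps, with the leftmost cap colored $1$ and the rest colored $2$; since the two Heegaard splittings have the same genus $g$, both links carry $2g+4$ branch points on the bridge sphere, so we may take $\Tt_1=\Tt_1'=\Tt_0$ on the same marked sphere $(S,\bold x)$. By Theorem~\ref{thm:Hilden}, the $3$--fold cover of $(B_1,\Tt_0)$ is then one fixed genus $g$ handlebody $H=H_1=H_1'$, attached to the bridge surface by Hilden's standard map. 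With this normalization, $L$ (resp.\ $L'$) is the plat closure of a $3$--colored braid $\beta$ (resp.\ $\beta'$) on $(S,\bold x)$; correspondingly $\Tt_2$ is $\Tt_0$ with $\beta$ inserted in a collar of the bridge sphere, and the Heegaard splitting $M=H_1\cup_\Sigma\overline{H_2}$ has gluing map determined, up to the handlebody mapping class groups of the two sides, by the lift $\widetilde\beta$ of $\beta$ to $\Sigma_g$; likewise that of $M'$ by $\widetilde{\beta'}$.

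I would then translate the hypothesis and invoke Piergallini. The two Heegaard splittings being homeomorphic means precisely that $\widetilde{\beta'}$ is obtained from $\widetilde\beta$ by pre- and post-composition with automorphisms of $H$ restricted to $\Sigma_g$. The heart of Piergallini's proof is that two normalized colored diagrams represent homeomorphic $3$--fold irregular covers if and only if they are related by a finite sequence of his four moves together with colored isotopy, and, crucially, that his reduction proceeds through normalized diagrams by acting on the middle braid while leaving the top plat untouched. Since $M\cong M'$ by hypothesis, his criterion applies, and running his procedure until the braid has been carried to $\beta'$ yields a diagram colored isotopic to the normalized diagram of $L'$. Because $\beta$ and each of Piergallini's moves is supported in a collar of the bridge sphere on the $B_2$ side, the whole sequence is realized by Piergallini moves on $\Tt_2$ performed inside $B_2$, together with isotopies of $\Tt_2$ fixing $\partial B_2=S$ pointwise, so that neither the bridge sphere nor $\Tt_1$ is ever disturbed. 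The output is a tangle $\Tt_2''$ with $\Tt_1\cup_\bold x\overline{\Tt_2''}$ isotopic to $L'$. The stabilization-type moves among Piergallini's four temporarily raise the bridge number; since these too act only on the $B_2$ side, they can be paired with destabilizations after $\beta'$ has been reached so that $\Tt_2''$ lies on the original marked sphere $(S,\bold x)$ (alternatively, in the intended application one first stabilizes so that enough bridges are available throughout).

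The main obstacle is precisely this localization bookkeeping. Piergallini phrases his moves as local modifications of planar diagrams and his equivalences as colored diagram isotopy, whereas I need them as modifications of a tangle properly embedded in $B_2$ carried out rel the bridge sphere; I must check that no step of his induction sweeps a strand across the top plat or otherwise alters $(B_1,\Tt_1)$ --- equivalently, that every diagram arising in his proof can be kept normalized with an \emph{unchanged} top plat, so that ``colored diagram isotopy'' upgrades to ``isotopy of $\Tt_2$ in $B_2$ rel $\partial$''. One should also reconcile the normalizations of Hilden and Piergallini, confirming that the standard cover of $\Tt_0$ provided by Theorem~\ref{thm:Hilden} is the one with respect to which Piergallini's moves are stated, and verify that his procedure genuinely terminates at the prescribed braid $\beta'$ once the target cover is pinned to $M'\cong M$, rather than at merely some braid with homeomorphic cover. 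These points are implicit in~\cite{pierg91}, and granting them the theorem follows.
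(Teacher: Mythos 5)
Your proposal is correct and follows essentially the same route as the paper: normalize both bridge splittings so they agree as tangles on the $B_1$ side, then observe that Piergallini's procedure in Section~3 of~\cite{pierg91} acts on the braid portion of the normalized diagram while fixing the top plat, so his moves and colored isotopies are supported in $B_2$ rel the bridge sphere. The paper's proof is exactly this two-step appeal (stated much more tersely), and the ``localization bookkeeping'' you flag as the main obstacle is precisely what the authors acknowledge when they note that Piergallini proves the needed claim ``though this phrasing is not explicit.''
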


\begin{proof}
	First, we isotope the normalized bridge splitting $(B_1',\Tt_1')\cup_{(S',\bold x')}\overline{(B_2',\Tt_2')}$ to a normalized bridge splitting $(B_1,\Tt_1)\cup_{(S,\bold x)}\overline{(B_2,\Tt_2'')}$.  In other words, we arrange that the given two normalized bridge splittings to agree as normalized tangles in $B_1$.  Having done this, we are in position to apply the techniques of Section~3 of~\cite{pierg91}, where the claim that $\Tt_2$ and $\Tt_2''$ are related by Piergallini moves and isotopy rel-$\partial$ is proved, though this phrasing is not explicit.
\end{proof}

\subsection{Proof of Theorem~\ref{thmx:main}}
\label{subsec:proof}

We now recall and prove our main result.  In the following statement, we have permuted the parameters of the trisection for notational convenience in the proof.

\begin{reptheorem}
{thmx:main}
	Let $X$ be a 4--manifold that admits a $(g;k_1,0,k_3)$--trisection $\T$.  Then $\T$ is an irregular 3--fold cover of the standard trisection $\mathbb{T}_0$ of $S^4$. The branching set $\Ss$ is a closed, connected surface, which is smoothly embedded in $S^4$ away from one singular point, the cone on a link.
	
	If, in addition, $k_3=0$, the branching set $\Ss'$ can be chosen to be embedded in $S^4$. In this case, there are two singular points, each the cone on a knot.
\end{reptheorem}

\begin{proof}
	Let $\T$ be a $(g;k_1,0,k_3)$--trisection of $X$ given by the decomposition $X = Z_1\cup Z_2\cup Z_3$, with core $\Sigma$ and spine $H_1\cup H_2\cup H_3$.  Let $\T_0$ be the standard trisection $\T_0$ of $S^4$, with spine given by $B_1\cup B_2\cup B_3$, where each $B_i$ is a three-ball with boundary the core $S\cong S^2$. We will construct the desired branched covering map in stages. First, we apply Hilden's techniques to build a branched covering from the spine of $\T$ to the spine of $\T_0$. Next, we will modify this branched covering to control its branch locus.  Finally, we will extend the modified branched covering across the sectors of the trisections.
	
	We begin by identifying the cores of the two trisections, $\Sigma$ and $S$, with the standard genus--$g$ surface $\Sigma_g$ and the standard 2--sphere $S^2$, respectively.  Having done so, we view Hilden's covering map $\pi$ as having domain $\Sigma$, codomain $S$, and branch locus a fixed collection $\bold x$ of $2g+4$ points in~$S$.  We denote this set-up by $\pi\colon \Sigma\to(S,\bold x)$.
		
	By Theorem~\ref{thm:Hilden}, we can extend Hilden's map $\pi\colon \Sigma\to(S,\bold x)$ to an irregular 3--fold covering $\wh\pi_i\colon H_i\to(B_i,\Tt_i)$ for each $i\in\Z_3$.  This completes the first step of the proof outlined above.  The potential obstacle to extending this map over the sectors of $\T_0$ is that the unions
	$$(S^3_i,L_i) = (B_i,\Tt_i)\cup_{(S,\bold x)}\overline{(B_{i+1},\Tt_{i+1})}$$
	are uncontrolled. Indeed, a colored link in $S^3$ does not always bound a smooth colored surface in~$B^4$~\cite{kjorr2017admissible}. We will avoid this problem by arranging that $L_1$ and $L_3$ be unlinks.
	
	First, consider the covering
	$$\pi_1 = \wh\pi_1\cup_\pi\wh\pi_2\colon H_1\cup_\Sigma\overline{H_2}\to(B_1,\Tt_1)\cup_{(S,\bold x)}\overline{(B_2,\Tt_2)},$$
	and note that $(S^3_1,L_1) = (B_1,\Tt_1)\cup_{(S,\bold x)}\overline{(B_2,\Tt_2)}$ is a normalized bridge splitting of $L_1$ and $Y_1 = H_1\cup_\Sigma \overline{H_2}$ is a genus $g$ Heegaard splitting of $\#^{k_1}(S^1\times S^2)$.
	By Lemma~\ref{lemma:unlink-lift}, there is an irregular 3--fold covering
	$$\pi_1'\colon Y_1'\to(S^3_1,L_1'),$$
	where $L_1'$ is an unlink with $k_1+2$ components, equipped with a normalized $(g+2)$--bridge splitting
	$$(S^3_1,L_1') = (B_1',\Tt_1')\cup_{(S',\bold x')}\overline{(B_2',\Tt_2')},$$
	which lifts under the covering map to a genus $g$ Heegaard splitting $Y'_1 = H_1'\cup_{\Sigma'}\overline{H_2'}$, with $Y_1'\cong Y_1 \cong \#^{k_1}(S^1\times S^2)$. 
	
	By Waldhausen's Theorem~\cite{Wal_68_Heegaard-Zerlegungen-der-3-Sphare}, the two genus $g$ Heegaard splittings $H_1\cup_\Sigma\overline{H_2}$ and $H_1'\cup_{\Sigma'}\overline{H_2'}$ are homeomorphic.  Therefore, we can apply Theorem~\ref{thm:Pier_splitting} to conclude that $\Tt_2$ can be transformed via Piergallini moves and isotopy rel-$\partial$ to a tangle $(B_2,\Tt_2'')$ such that $L'' = \Tt_1\cup_\bold x\overline{\Tt_2''}$ is an unlink with $k_1+2$ components.  Let $\wh\pi_2''\colon H_2\to(B_2,\Tt_2'')$ denote the covering map corresponding to $\Tt_2''$.  Since $\wh\pi_2''$ and $\wh\pi_2$ agree when restricted to $\partial H_2$, we can swap out $\wh\pi_2$ for $\wh\pi_2''$.  Let $\pi_1'' = \wh\pi_1\cup\wh\pi_2''$.
	
	Importantly, we were able to change $\Tt_2$ and $\wh\pi_2$ above without altering $\wh\pi_1$ or $\Tt_1$.  Thus, we can repeat the above process to transform $\Tt_3$ into a tangle $\Tt_3''$ such that $\Tt_1\cup_\bold x\overline{\Tt_3''}$ is an unlink with $k_3+2$ components, swapping out $\wh\pi_3$ for a corresponding covering map $\wh\pi_3''$.  In this way, we obtain extensions $\wh\pi_1$, $\wh\pi_2''$, and $\wh\pi_3''$ of Hilden's map $\pi\colon\Sigma\to S$ across the respective handlebodies $H_1$, $H_2$, and $H_3$ comprising the spine of $\T$ whose branch loci $\Tt_1$, $\Tt_2''$, and $\Tt_3''$ have the property that $L_1 = \Tt_1\cup_\bold x\overline{\Tt_2''}$ and $L_3 = \overline{\Tt_1}\cup_\bold x\Tt_3''$ are both unlinks.  Let $\pi_2'' = \wh\pi_2''\cup\wh\pi_3''$, and let $\pi_3'' = \wh\pi_3''\cup\wh\pi_1$.

	To complete the proof, we must extend the covering $\pi_1''\cup\pi_2''\cup\pi_3''$ from the spine of $\T$ across the three sectors $Z_i$.  Since $L_1$ and $L_3$ are unlinks in normalized bridge position, they have a single blue component and at least one red component.  (Note $|L_i| = k_i+2$.)  This coloring can be extended  across trivial disk-tangles $\Dd_1$ and $\Dd_3$ bounded by these unlinks.  This determines extensions of $\pi_1''\cup\pi_2''\cup\pi_3''$ across $Z_1$ and $Z_3$.  (Technically, this gives extensions across 4--dimensional 1--handlebodies $Z_1'$ and $Z_3'$ with respective boundaries $Y_1$ and $Y_3$; however, the $Z_i'$ are diffeomorphic rel-$\partial$ to the $Z_i$, by Laudenbach-Poenaru~\cite{LauPoe_72_A-note-on-4-dimensional}, so we can regard these extensions as being across $Z_1$ and $Z_3$.)
	
	Finally, we note that $L_2'' = \Tt_2''\cup_\bold x\overline{\Tt_3''}$ has not been controlled and, at first glance, appears to be an arbitrary link.  However, we know that $L_2''$ is in normalized bridge position, and we know that $Y_2 = H_2\cup_\Sigma\overline{H_3}$ is diffeomorphic to $S^3$, since $k_2=0$.
	It follows that $\pi_2''$ gives an irregular 3--fold cover of $S^3$ to itself with branch locus $L_2''$.  By taking the cone of this covering map, we obtain a singular branched covering of $B^4$ to itself with branch locus the cone $\Dd_2$ on $L_2''$. Set $\Ss = \Dd_1\cup\Dd_2\cup\Dd_3$. This is the branching set of our cover and and we see that $\Ss$ is a smoothly embedded surface  in $S^4$ away from a single singularity, which is the cone on the link~$L_2''$.  This completes the proof of the first statement.

	We now address the case when $k_3=0$.  In this case, we proceed as before, arranging that $L_1''$ be an unlink, and noting that this involves changing only $\Tt_2$ and $\wh\pi_2$.  Next, we apply Theorem~\ref{thm:Hilden} to $\wh\pi_3$ and $\Tt_3$ to control the pairing of the bridge points $\bold x$ induced by this tangle.
	This gives a new covering $\wh\pi_3'\colon H_3\to (B_3,\Tt_3'')$ such that both $L_2'' = \Tt_2''\cup\overline{\Tt_3''}$ and $L_3'' = \Tt_3''\cup_\bold x\overline{\Tt_1}$ are connected -- i.e. knots.
	(It is a simple exercise to verify that there is a pairing of the points of $\bold x$ that assures this. In fact, this follows from the fact that $(b;c_1,1,1)$--bridge trisections exist for any choice of $c_1$ and $b>c_1$; see Figure~\ref{fig:pairingunlinkbridge} for an example of such a pairing.)
	Then, we choose $\Dd_1'$ to be trivial disks for $L_1''$, and we choose $\Dd_2'$ and $\Dd_3'$ to be cones on $L_2''$ and $L_3''$, respectively. This allows us to extend $\pi$ across the entire trisection $\T$, as desired.  Let $\Ss' = \Dd_1'\cup\Dd_2'\cup\Dd_3'$; this is the desired branching set containing two singularities that are each the cone on a knot.
\end{proof}

\begin{figure}[h!]
\includegraphics[scale=.5]{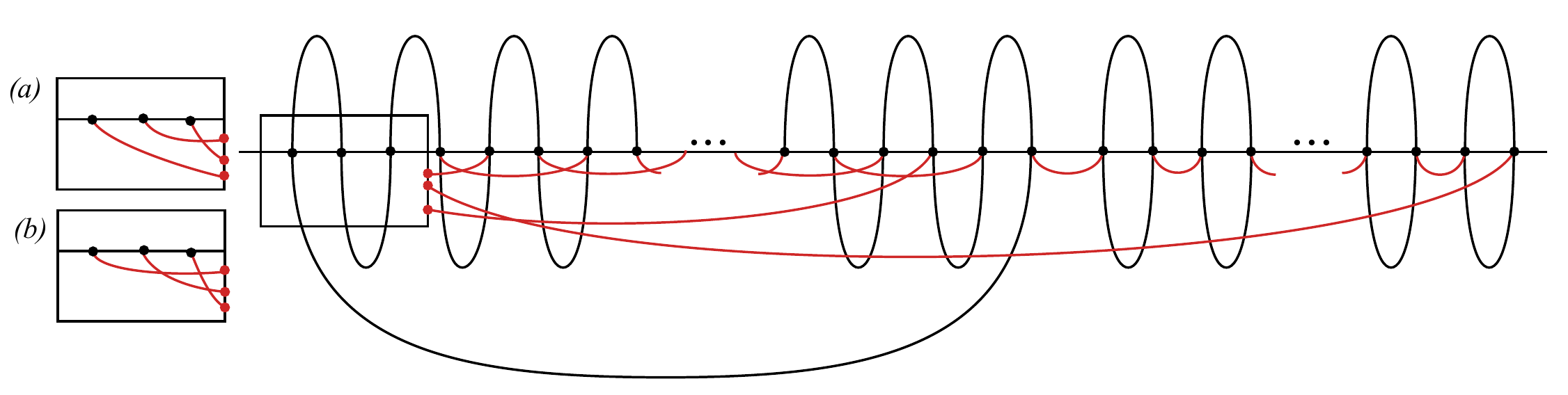}
\caption{A $b$--bridge splitting of an unlink, together with red arcs illustrating a pairing of the boundary points whose union with either tangle is connected. The boxed portion of the figure is given by (a) or (b) based on whether the number of maxima in the leftmost component of the unlink is odd or even, respectively.}
\label{fig:pairingunlinkbridge}
\end{figure}

To justify Remark~\ref{rmk:euler}, we note that the bridge trisection of $\Ss$ constructed in the proof above induces a cellular decomposition of $\Ss$ that contains $2(g+2)$ zero-cells, $3(g+2)$ one-cells, and $k_1+k_2+4$ two-cells, plus the (contractible) cone $\Dd_2$ on $L_2''$.  It follows that $\chi(\Ss) = k_1+k_2-g+3$, as claimed. 
Similarly, the bridge trisection of $\Ss'$ induces a cellular decomposition that contains $2(g+2)$ zero-cells, $3(g+2)$ one-cells, and $k_1+2$ two-cells, plus the two cones $\Dd_2'$ and $\Dd_3'$ on the knots $L_2''$ and $L_3''$.  It follows that $\chi(\Ss') = k_1-g+2$, as claimed.
The connection to the Euler characteristic of $X$ follows from the following fact:  If $X$ admits a $(g;k_1,k_2,k_3)$--trisection, then
$$\chi(X) = 2+g-k_1-k_2-k_3.$$

\begin{remark}
\label{rmk:smooth}
	In the above proof, if $L_3''$ can be arranged to be an unlink, then it has two components, since it is in normalized bridge position and its irregular 3--fold cover is $S^3$.  In this case, the cone $\Dd_3$ on $L_3''$ can be swapped out for a 2--component trivial disk-tangle.  After this modification, $\Ss$ becomes a smoothly embedded surface of Euler characteristic $k_1+k_2-g+4$.
	
	Changing the number of singularities in the construction of $f$ can affect the orientability of the branching set $\Ss$, as seen from Euler characteristic and signature considerations. When $\Ss$ has a trivial normal bundle, the number of singular points is congruent mod~2 to the signature $\sigma(X)$ of the covering space, since the contribution of each singularity to  $\sigma(X)$ is an odd integer~\cite{cahnkju2018genus,kjuchukova2018dihedral}.	
\end{remark}

\begin{question}
\label{ques:improve_sing}
	Can the proof of Theorem~\ref{thmx:main} be adapted to ensure that the singularity on the branching set is the cone on a knot? Can the singularity be removed altogether?
\end{question}

\section{Examples}
\label{examples}

In this section, we give some examples of irregular 3--fold covers of $S^4$. We use Fox 3--colored tri-plane diagrams to depict the branching sets and associated dihedral representations. This is sufficient to determine trisection diagrams of the corresponding irregular 3--fold covers~\cite{cahnkjuchukova2017singbranchedcovers}.

\begin{figure}[h!]
	\includegraphics[width=\textwidth]{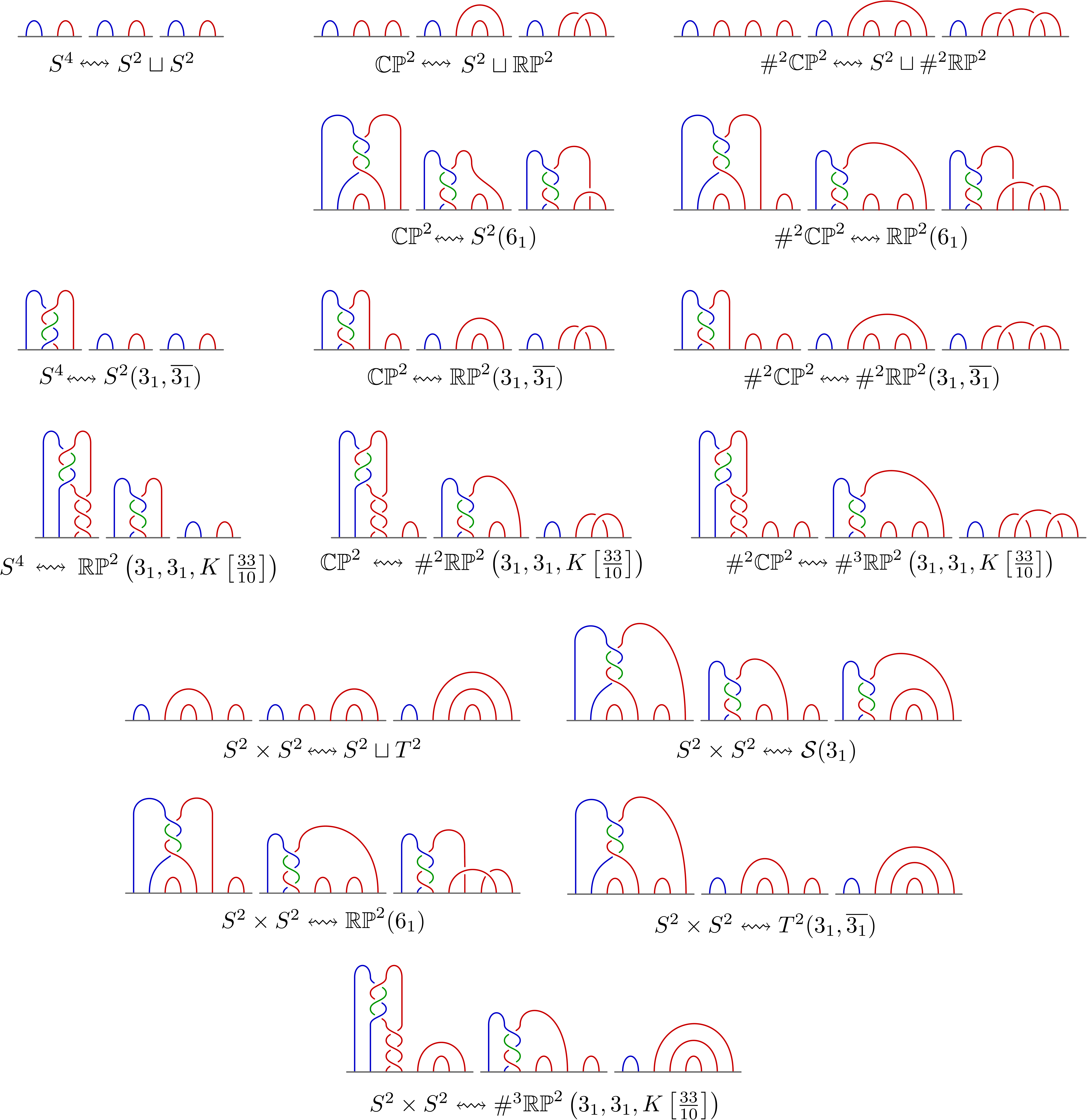}
	\caption{Tricolored tri-plane diagrams of some irregular 3--fold covers of $S^4$. Below each diagram is the total space of the corresponding cover; the homeomorphism type of the branching set; the singularities of the embedding.  }
	\label{fig:triplanes}
\end{figure}

\begin{example}
\label{ex:basic}
	Figure~\ref{fig:triplanes} gives Fox 3--colored tri-plane diagrams of the branching sets for basic examples of irregular 3--fold coverings of $S^4$.  The notation ``$X\leftrightsquigarrow\Ss$'' means that $X$ is the irregular 3--fold cover of $S^4$, branched along a singular surface homeomorphic to $\Ss$ and embedded in $S^4$ according to the given tri-plane diagram.  The singularities of the given embedding are described in the parentheses; for example, $\RP^2\left(3_1, 3_1, K\left[\frac{33}{10}\right]\right)$ represents a projective plane with three singularities, two of which are cones on right-handed trefoils and one of which is a cone on the 2--bridge knot $K\left[\frac{33}{10}\right]$.  The surface $\Ss(3_1)$ is the spun trefoil, which is a smoothly embedded, knotted 2--sphere.
	
	In each of the top four rows, moving left to right corresponds to taking the connected sum of the previous branch locus with a monochromatic projective plane.  This changes the cover by taking the connected sum with $\CP^2$.  This summand can be chosen to have either orientation by a change of crossing in the third tangle.  Passing from one figure to the the one below it corresponds to increasing the number of singularities of the branch locus and lowering the Euler characteristic by one.  In each case, this is accomplished via a 3--move~\cite{montesinos1985note}.	
	The branch loci in the first and fifth rows are smoothly embedded.  All singularities are cones on knots.
\end{example}

\begin{example}
[\cite{cahnkju2018genus}]
	Given an integer $n\geq 0$, there exists a 3--fold cover $$f_{2n+1}\colon \#^{2n+1}\mathbb{CP}^2\to S^4$$ with branching set an embedded orientable surface $\Ss_n\subset S^4$ of genus $n$ and one two-bridge singularity. The map $f_{2n+1}$ induces the standard  genus $2n+1$ trisection of $\#^{2n+1}\mathbb{CP}^2$.  
\end{example}

In~\cite{LamMei_18_Bridge-trisections}, $(g,0)$--trisections are constructed for many complex algebraic surfaces, including K3.  Since these trisections satisfy the hypotheses of Theorem~\ref{thmx:main}, it is natural to desire an explicit construction of the resulting cover.

\begin{problem}
\label{ques:algebraic}
	Find explicit descriptions of the bridge trisected surfaces whose irregular 3--fold covers are these $(g,0)$--trisections of algebraic surfaces.
\end{problem}

\bibliographystyle{amsalpha}
\bibliography{covers}

\end{document}